\documentclass[11pt]{amsart}

\usepackage{amsmath,amssymb,amsthm}
\usepackage{booktabs}
\usepackage{array}
\usepackage[hidelinks]{hyperref}

\theoremstyle{plain}
\newtheorem{theorem}{Theorem}[section]
\newtheorem{proposition}[theorem]{Proposition}
\newtheorem{lemma}[theorem]{Lemma}

\theoremstyle{definition}
\newtheorem{definition}[theorem]{Definition}

\theoremstyle{remark}
\newtheorem{remark}[theorem]{Remark}

\makeatletter
\def\@settitle{\begin{center}%
  \baselineskip14\p@\relax
  \bfseries\Large
  \@title
  \end{center}%
}
\def\@setauthors{%
  \begingroup
  \def\thanks{\protect\thanks@warning}%
  \trivlist
  \centering
  \normalsize
  \@topsep30\p@\relax
  \advance\@topsep by -\baselineskip
  \item\relax
  \andify\authors
  \authors
  \endtrivlist
  \endgroup
}
\makeatother

\newcommand{\NN}{\mathbb{N}}
\newcommand{\ZZ}{\mathbb{Z}}
\newcommand{\RR}{\mathbb{R}}
\newcommand{\QQ}{\mathbb{Q}}
\newcommand{\dy}{\ZZ[\tfrac12]}
\newcommand{\TU}{\textup{(TU)}}

\title[Condition (TU) is false]{Isaac's Condition (TU) is false}
\author{Ralf Stephan}
\thanks{Institute for Globally Distributed Open Research and Education (IGDORE) \\
MSC 11K16, 42A38, 28A80
}
\hypersetup{
  pdftitle={Isaac's Condition (TU) is false},
  pdfauthor={Ralf Stephan}
}

\date{\today}

\begin{document}
\begin{abstract}
In 2006 Isaac reduced the simple normality to base $2$ of $\sqrt s$, for $s$
not a perfect square, to a property of dyadic expansions that he called
Condition~(TU): the upper frequency of the digit~$1$ in $\sqrt\nu$, along any
subsequence, depends only on the tail of the expansion of $\nu$. We show that
Condition~(TU) is false, already for the full sequence. We give two proofs.
The first is elementary and turns Isaac's own reduction against the condition.
The second uses the theorem of Kaufman and Mosquera--Shmerkin on the Fourier
decay of smooth images of self-similar measures, and gives counterexamples
for almost every point of a Cantor measure. All results are machine-verified
in Lean~4; the second proof uses the two cited theorems as hypotheses.
\end{abstract}
\maketitle

\section{Introduction}

A real number is \emph{simply normal to base $2$} if the digits $0$ and $1$
occur in its dyadic expansion with asymptotic frequency $1/2$ each. Whether
$\sqrt2$ is simply normal to base $2$ is open. Isaac \cite{Isa06} claimed that
$\sqrt s$ is simply normal to base $2$ for every natural number $s$ that is not
a perfect square \cite[Thm.~1]{Isa06}. His proof has two parts. In
\cite[\S2]{Isa06} he shows that Theorem~1 follows from a property he calls
\emph{Condition~\TU} \cite[Thm.~2]{Isa06}. In \cite[\S3]{Isa06} he argues that
Condition~\TU{} holds. A later paper \cite{Isa11} restates the condition
and offers ``a simpler direct proof that Condition (TU) is true''
\cite[Thm.~1(b)]{Isa11}.

We show that Condition~\TU{} is false. Thus neither argument for it can be
correct, and the reduction in \cite[\S2]{Isa06}, while valid, cannot be used.
The simple normality of $\sqrt2$ to base $2$ remains open.

\begin{theorem}\label{thm:main}
Condition~\TU{} is false. More precisely, there are real numbers
$\nu,\nu'\in(0,1)$ with $\nu$ irrational and $\nu'-\nu\in\dy$ such that
\[
\limsup_{N\to\infty}f_N(\sqrt\nu)\ne\limsup_{N\to\infty}f_N(\sqrt{\nu'}).
\]
\end{theorem}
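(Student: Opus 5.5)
Here $f_N(x)$ denotes the frequency of the digit $1$ among the first $N$ dyadic digits of $x$. The plan is to construct $\nu$ directly, so that $\sqrt\nu$ is forced to have an abnormally low upper frequency of ones while $\sqrt{\nu'}$ does not.

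\textbf{Step 1: choose the root, not the square.} Pick $x\in(0,1)$ irrational whose dyadic expansion has very few ones. Concretely, let $x=\sum_k 2^{-n_k}$ with $n_k$ growing superexponentially, say $n_k=k!$. Then $\limsup_N f_N(x)=0$. Put $\nu=x^2$. This is irrational, since $x$ is irrational and not a quadratic irrational; $x$ is even a Liouville-type number, hence transcendental.

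\textbf{Step 2: choose the dyadic shift.} Set $\nu'=\nu+d$ with $d\in\dy$, for instance $d=1/4$ (for small $x$, $\nu'\in(0,1)$). We need $\limsup_N f_N(\sqrt{\nu+d})>0$. Write
\[
\sqrt{\nu+d}=\sqrt d\,\sqrt{1+\nu/d}.
\]
Since $\nu=x^2$ has sparse digits, $\nu$ is extremely well approximated by the dyadic rationals $\nu_K=(\sum_{k\le K}2^{-n_k})^2$. The error satisfies $|\nu-\nu_K|\asymp 2^{-n_{K+1}}$. Hence the dyadic digits of $\sqrt{\nu'}$ agree with those of the algebraic number $\sqrt{\nu_K+d}$ up to position roughly $n_{K+1}$, except for possible carry effects.

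\textbf{Step 3: lower-bound the frequency of ones.} It suffices to show the following: for infinitely many $K$, the number $\sqrt{\nu_K+d}$ contains a positive proportion of ones among its digits up to $n_{K+1}$. Here $\nu_K+d$ is a dyadic rational that is not a square of a dyadic rational, so $\sqrt{\nu_K+d}$ is irrational with a non-terminating expansion. Its digits cannot contain long runs of zeros, because that would give a dyadic approximation of accuracy better than the Liouville bound for quadratic irrationals. Quantitatively, the bound $|\alpha-p/2^m|\gg 2^{-2m}$ for a quadratic irrational $\alpha$ says that a run of zeros starting at position $m$ has length at most $m+O(1)$, with the implicit constant depending on $K$. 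That alone gives only a weak density statement. A cleaner route is to take $d=1/4$ and a single fixed $K$, so that we compare one fixed quadratic irrational with $\sqrt{\nu'}$, and choose $n_{K+1}$ afterward, and inductively all later $n_k$, so large that the prefix of $\sqrt{\nu'}$ of length about $n_{K+1}/2$ has frequency of ones at least $1/4$, say. This is where $\liminf$ versus $\limsup$ helps: we only need infinitely many good prefixes. Any irrational number whose expansion is not eventually $0$ has some positive upper frequency bound, by the Liouville estimate. Replacing $d$ by a suitable dyadic rational if necessary, choose it so that $\sqrt{\nu_K+d}$ has a prefix with many ones, which is achievable because we are free to pick the dyadic rational.

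\textbf{Main obstacle.} The delicate point is the simultaneous inductive construction. Each new term $2^{-n_{k+1}}$ must be placed far enough out that two things hold. First, $x$ stays digit-sparse. Second, $\sqrt{\nu'}$ exhibits a long prefix agreeing with a fixed quadratic irrational of positive digit frequency (for instance $\sqrt{\nu_k+d}$, whose frequency of ones on long initial segments we force to be at least some $c>0$ by choosing $d$ with a prescribed digit pattern). Carry propagation between these approximants must be controlled, which I would handle by a margin argument using the bound $2^{-n_{k+1}}\ll 2^{-2n_k}$. The result is $\limsup_N f_N(\sqrt\nu)=0<c\le\limsup_N f_N(\sqrt{\nu'})$.
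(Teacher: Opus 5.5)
There is a genuine gap in Step~3, and the whole argument rests on that step. To get $\limsup_N f_N(\sqrt{\nu'})>0$ you need, for infinitely many $K$, a length $M$ well beyond $n_K$ (you then put $n_{K+1}\approx M$) with $f_M(\sqrt{\nu_K+d})\ge c$, where $c>0$ is independent of $K$. In other words, you need a uniform positive upper frequency of ones for the quadratic irrationals $\sqrt{\nu_K+d}$. None of your justifications supplies this.

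\begin{itemize}
\item Your claim that every irrational not eventually $0$ has positive upper frequency is false in general; your own $x$ is a counterexample.
\item For quadratic irrationals, the Liouville bound $|\beta-p/2^m|\gg2^{-2m}$ only says that a run of zeros starting after position $m$ has length at most $m+O(1)$. That gives $S_N(\beta)\gg\log N$, nothing more.
\item Choosing $d$ ``with a prescribed digit pattern'' does not help. $d$ is fixed once and for all, and it controls only the leading digits of $\sqrt{\nu_K+d}$, not the digits at positions $\gg n_K$ for every $K$.
\item A single fixed $K$ gives at most one good prefix, whereas a $\limsup$ needs infinitely many.
\end{itemize}

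No unconditional result of this kind is known. For square roots of non-square dyadic rationals, the known lower bounds are of the shape $S_N\gg\sqrt N$. A positive upper frequency of ones is a question of the same nature as the open simple normality of $\sqrt2$.

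Apart from this, your construction is essentially the paper's first proof: a sparse $x=\sum_j2^{-n_j}$, the pair $(x^2,x^2+1/4)$, approximants $y_j=\sqrt{x_j^2+1/4}$, and $n_{j+1}$ placed where $y_j$ has high frequency. The paper gets the missing bound from the hypothesis $\TU_1$ itself, which is why it argues by contradiction. For irrational $y\in(1/2,1)$ with $y^2\in\dy$, the numbers $\omega_1=y/2-1/4$ and $\omega_2=1-y/8$ have squares differing by an element of $\dy$. Moreover $\omega_1$ carries the digits of $y$ and $\omega_2$ their complements, so $\TU_1$ forces $f_N(y)\ge2/5$ infinitely often (Proposition~\ref{prop:local}).

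Your ``margin argument'' for carries is made precise by taking $n_{j+1}=m$ with $X_m(y_j)=0$ (Lemma~\ref{lem:zero}). Then the error $0\le y(x)-y_j\le\frac34\,2^{-m}$ cannot disturb the first $m-1$ digits (Lemma~\ref{lem:carry}).

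So your plan becomes a valid proof once you assume $\TU_1$ and derive a contradiction. An explicit counterexample needs a genuinely different input, such as the Fourier-decay argument of the second proof.
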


Here $f_N(z)$ is the relative frequency of $1$'s among the first $N$ dyadic
digits of $z$; see Section~\ref{sec:setting}. The theorem refutes \TU{} already
for the full sequence $n_i=i$. In particular, it contradicts the conclusion
$\lim_N\bigl(f_N(\sqrt\eta)-f_N(\sqrt{\eta_1})\bigr)=0$ of
\cite[Prop.~1]{Isa11}, which Isaac derives from \TU{} for irrational $\eta$
and any $\eta_1$ whose expansion differs from that of $\eta$ in finitely many
places. We give two proofs.

The first proof (Section~\ref{sec:bootstrap}) is elementary. Under \TU{}, a
local form of Isaac's own reduction shows that $f_N(y)\ge2/5$ infinitely often
whenever $y$ is irrational with $y^2$ dyadic (Proposition~\ref{prop:local}).
We apply this to the numbers $\sqrt{x_j^2+1/4}$, where $x_j$ runs through
partial sums of a very sparse dyadic series $x=\sum_j2^{-n_j}$, and choose the
positions $n_j$ so that the digits of $\sqrt{x^2+1/4}$ inherit the bound. Since
$f_N(x)\to0$, the pair $(\nu,\nu')=(x^2,x^2+1/4)$ violates \TU. The pair
exists only through the contradiction, so this proof does not exhibit a
counterexample.

The second proof (Section~\ref{sec:fourier}) does exhibit counterexamples, for
almost every point of a Cantor measure~$\mu$. By a theorem of Mosquera and
Shmerkin \cite[Thm.~3.1]{MS18}, extending a result of Kaufman \cite{Kau84}, the
image of $\mu$ under the convex map $g(\omega)=\sqrt{\omega^2/4+1/4}$ has power
Fourier decay. By Davenport, Erd\H{o}s and LeVeque \cite{DEL63}, almost every
point of $g_*\mu$ is then simply normal. The points of the support of $\mu$
have few $1$'s, so $(\omega^2/4,\omega^2/4+1/4)$ violates \TU{} for
$\mu$-almost every~$\omega$.

\begin{remark}\label{rem:lebesgue}
\TU{} does hold for Lebesgue-almost every $\nu$. By Borel's theorem and since
$\nu\mapsto\sqrt{\nu+d}$ maps null sets to null sets, for almost every $\nu$
the numbers $\sqrt{\nu+d}$ are simply normal for all $d\in\dy$ at once. So
whether \TU{} holds ``almost everywhere'' depends on the reference measure,
and the second proof shows that this cannot be repaired by passing to a
measure-theoretic form of \TU. 
\end{remark}

\section{Setting}\label{sec:setting}

For a real number $z$ and $k\ge1$ let
\[
X_k(z)=\lfloor 2^kz\rfloor \bmod 2\in\{0,1\}
\]
be the $k$-th coordinate of the dyadic expansion of $z$ (after the point), as
in \cite[(1)]{Isa06}. For dyadic rationals this picks the terminating
expansion; otherwise the expansion is unique. Put
\[
S_N(z)=X_1(z)+\dots+X_N(z),\qquad f_N(z)=\frac{S_N(z)}N ,
\]
the relative frequency of $1$'s in the first $N$ digits \cite[(3)]{Isa06}.
Two numbers $\nu,\nu'$ have the \emph{same tail} if $\nu'-\nu\in\dy$. For
$\nu,\nu'\in(0,1)$ with unique expansions this means that the two expansions
agree from some digit on, which is the definition in \cite[\S1]{Isa06}.

\begin{definition}[{Isaac \cite[\S1]{Isa06}}]\label{def:tu}
\emph{Condition~\TU} is the statement: for all $\nu,\nu'\in(0,1)$ with $\nu$
irrational and the same tail, and for every strictly increasing sequence
$(n_i)$ of natural numbers,
\[
\limsup_{i\to\infty}f_{n_i}(\sqrt\nu)=\limsup_{i\to\infty}f_{n_i}(\sqrt{\nu'}).
\]
We write $\TU_1$ for the same statement with only $n_i=i$.
\end{definition}

Isaac writes $\omega=\sqrt\nu$ and phrases \TU{} as ``$f(\omega)=h(\nu)$ is a
tail function with respect to the $U$ sequence'', the $U$ sequence being the
expansion of $\nu$. Clearly \TU{} implies $\TU_1$; both proofs below refute
$\TU_1$.

We use three elementary facts about digits.

\begin{lemma}\label{lem:shift}
Let $a,b$ be real and $s,K\in\NN$.
\begin{enumerate}
\item If $X_{k+s}(a)=X_k(b)$ for all $k\ge K$, then $f_N(a)-f_N(b)\to0$.
\item If $X_{k+s}(a)=1-X_k(b)$ for all $k\ge K$, then $f_N(a)+f_N(b)\to1$.
\end{enumerate}
\end{lemma}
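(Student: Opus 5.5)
The plan is to compare partial sums directly. Write $N' = N - s$ and use the hypothesis to match digits of $a$ at positions $k+s$ with digits of $b$ at positions $k$, for $K \le k \le N'$. All remaining digits contribute a bounded amount, at most $K+s$ in total. Dividing by $N$ then gives the limits.

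For (1), split $S_N(a)$ into the digits at positions $1,\dots,K+s-1$ and those at positions $K+s,\dots,N$. The first block contributes at most $K+s$. The second block equals
\[
\sum_{k=K}^{N-s} X_{k+s}(a)=\sum_{k=K}^{N-s}X_k(b),
\]
which differs from $S_N(b)$ by at most $(K-1)+s$: these are the digits of $b$ at positions below $K$ and at positions $N-s+1,\dots,N$. Hence
\[
|S_N(a)-S_N(b)|\le 2(K+s)
\]
for all $N\ge K+s$. Dividing by $N$ gives $f_N(a)-f_N(b)\to0$.

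For (2), the same split applies, but now the middle sum is
\[
\sum_{k=K}^{N-s}\bigl(1-X_k(b)\bigr)=(N-s-K+1)-\sum_{k=K}^{N-s}X_k(b).
\]
Therefore
\[
\bigl|S_N(a)+S_N(b)-N\bigr|\le C(K,s)
\]
for a constant $C(K,s)$ independent of $N$; $3(K+s)$ works. Dividing by $N$ yields $f_N(a)+f_N(b)\to1$.

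There is no real obstacle here. The only care needed is in the index bookkeeping: one must check that every boundary term, namely the initial digits of $a$, the initial digits of $b$, and the last $s$ digits of $b$, is bounded independently of $N$. Since each digit is $0$ or $1$, this is immediate. The argument uses nothing about the expansions being terminating or nonterminating; it relies only on the definition of $X_k$ via $\lfloor 2^k z\rfloor \bmod 2$.
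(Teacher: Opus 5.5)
Your proof is correct and follows the paper's argument. Like the paper, you bound $|S_N(a)-S_N(b)|$, respectively $|S_N(a)+S_N(b)-N|$, by a constant depending only on $K$ and $s$, and then divide by $N$. The paper states the constant $K+2s$; your cruder constants $2(K+s)$ and $3(K+s)$ work just as well.
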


\begin{proof}
In both cases $|S_N(a)-S_N(b)|$, respectively $|S_N(a)+S_N(b)-N|$, is at most
$K+2s$ for every $N$.
\end{proof}

\begin{lemma}\label{lem:carry}
Let $z,w$ be real and $m\ge1$ with $X_m(z)=0$ and
$\lfloor2^mz\rfloor\le\lfloor2^mw\rfloor\le\lfloor2^mz\rfloor+1$. Then
$\lfloor2^kw\rfloor=\lfloor2^kz\rfloor$ for all $k<m$; in particular $z$ and
$w$ have the same first $m-1$ digits. Consequently, if $X_n(z)=0$, then
$S_N(z+2^{-n})\le S_N(z)+1$ for all $N$.
\end{lemma}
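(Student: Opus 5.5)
The plan is to prove the first assertion directly from the definition $X_k(z)=\lfloor 2^kz\rfloor\bmod 2$ by a downward induction on the integer parts, and then to deduce the counting statement from it.

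\textbf{First assertion.} Put $A=\lfloor 2^mz\rfloor$. Since $X_m(z)=0$, $A$ is even. The hypothesis says $\lfloor 2^mw\rfloor\in\{A,A+1\}$.

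For any real $t$ and $k<m$ we have $\lfloor 2^kt\rfloor=\lfloor \lfloor 2^mt\rfloor/2^{m-k}\rfloor$. This is the standard identity $\lfloor \lfloor x\rfloor/q\rfloor=\lfloor x/q\rfloor$ for a positive integer $q$, applied with $x=2^mt$ and $q=2^{m-k}$.

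Hence it suffices to check that $\lfloor A/2^{j}\rfloor=\lfloor (A+1)/2^{j}\rfloor$ for every $j\ge1$. Because $A$ is even, $A+1$ is odd, so $A+1$ is not a multiple of $2^j$ for any $j\ge1$. Therefore no multiple of $2^j$ lies in the half-open interval $(A,A+1]$, and the two floors agree.

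With $j=m-k$ this gives $\lfloor 2^kw\rfloor=\lfloor 2^kz\rfloor$ for all $k<m$, including negative or zero $k$ if one wishes. Reducing mod $2$ for $1\le k\le m-1$ shows that $X_k(w)=X_k(z)$, so the first $m-1$ digits coincide.

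\textbf{Consequence.} Let $w=z+2^{-n}$ with $X_n(z)=0$. Then $\lfloor 2^nw\rfloor=\lfloor 2^nz\rfloor+1$, so the first part applies with $m=n$ and gives $X_k(w)=X_k(z)$ for $k<n$.

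At position $n$, the value $\lfloor 2^nw\rfloor=A+1$ is odd, so $X_n(w)=1$ while $X_n(z)=0$.

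For $k>n$, $2^kw=2^kz+2^{k-n}$ with $2^{k-n}$ an even integer. Hence $\lfloor 2^kw\rfloor=\lfloor 2^kz\rfloor+2^{k-n}$, and $X_k(w)=X_k(z)$.

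So $w$ and $z$ differ only in digit $n$, where a $0$ becomes a $1$. Consequently $S_N(w)=S_N(z)$ for $N<n$ and $S_N(w)=S_N(z)+1$ for $N\ge n$. In both cases $S_N(z+2^{-n})\le S_N(z)+1$.

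The only point requiring care is the floor identity together with the parity observation that $(A,A+1]$ contains no multiple of $2^j$. Everything else is bookkeeping. The terminating-expansion convention causes no trouble, since everything is phrased through floors.
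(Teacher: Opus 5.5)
Your proof is correct and follows essentially the same route as the paper. Both use the evenness of $\lfloor 2^m z\rfloor$ to force the integer parts to agree at all lower levels: the paper descends one level at a time via $\lfloor 2^k t\rfloor=2\lfloor 2^{k-1}t\rfloor+X_k(t)$, while you jump there directly with $\lfloor\lfloor x\rfloor/q\rfloor=\lfloor x/q\rfloor$. Both then get the counting bound from the fact that $z+2^{-n}$ differs from $z$ only in digit $n$.
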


\begin{proof}
Since $\lfloor2^mz\rfloor$ is even, the two values at level $m$ have the same
quotient by $2$, and one descends from $k$ to $k-1$ using
$\lfloor2^{k}t\rfloor=2\lfloor2^{k-1}t\rfloor+X_{k}(t)$. For the second
statement, $z+2^{-n}$ agrees with $z$ in the first $n-1$ digits, has digit $1$
at $n$, and agrees with $z$ after $n$.
\end{proof}

\begin{lemma}\label{lem:zero}
Let $z$ be real and $M\ge1$. There is $m>M$ with $X_m(z)=0$ and
$f_{m-1}(z)\ge f_M(z)$.
\end{lemma}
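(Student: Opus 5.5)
We need $m>M$ with $X_m(z)=0$ and $f_{m-1}(z)\ge f_M(z)$. The plan is to take $m$ to be the first position after $M$ at which a $0$ occurs, and to handle separately the case where no such zero exists.

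Let $m$ be the least index $m>M$ with $X_m(z)=0$, assuming one exists. Then $X_k(z)=1$ for all $M<k<m$, so
\[
S_{m-1}(z)=S_M(z)+(m-1-M).
\]
Since $f_M(z)\le 1$, we have $S_M(z)\le M$. Hence
\[
S_{m-1}(z)\,M=S_M(z)\,M+(m-1-M)M\ge S_M(z)\,M+(m-1-M)S_M(z)=S_M(z)(m-1).
\]
This is exactly $f_{m-1}(z)\ge f_M(z)$. The inequality simply says that appending a block of $1$'s cannot lower a frequency. When $m=M+1$ it is an equality.

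Existence of such an $m$ is the only real issue. With the convention $X_k(z)=\lfloor 2^kz\rfloor\bmod 2$, the terminating expansion is chosen for dyadic rationals. So the digit sequence of any real $z$ cannot end in an infinite run of $1$'s.

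To see this, suppose $X_k(z)=1$ for all $k>M$. Write $t=2^Mz-\lfloor 2^Mz\rfloor\in[0,1)$. The digits of $t$ are the $X_{M+k}(z)$. Each satisfies $\lfloor 2^k t\rfloor=2\lfloor2^{k-1}t\rfloor+1$, and by induction $\lfloor 2^kt\rfloor=2^k-1$ for all $k$. This gives $t\ge 1-2^{-k}$ for every $k$, so $t\ge1$, a contradiction.

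Therefore a zero occurs after $M$, the least one gives the desired $m$, and the lemma follows. The only step needing care is this exclusion of an eventual all-$1$ tail, which rests on the floor-function convention rather than on any analytic input.
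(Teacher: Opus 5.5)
Your proof is correct and follows the paper's argument. You take the first zero after position $M$, observe that the intervening block of $1$'s cannot lower the frequency, and rule out an all-$1$ tail via the floor-function digit convention; you also supply the details (the cross-multiplied inequality and the bound $t\ge 1-2^{-k}$) that the paper leaves implicit.
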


\begin{proof}
No expansion of the form $X_k=1$ for all $k>M$ occurs with our digit
convention, so there is a first $m>M$ with $X_m(z)=0$. The digits between $M$
and $m$ are all $1$, so the frequency does not decrease from $M$ to $m-1$.
\end{proof}

\section{First proof: the bootstrap}\label{sec:bootstrap}

The following is a local form of \cite[Thm.~2]{Isa06}. Isaac uses the pair
$\omega_{s_1}=1-\sqrt s/2^{2l}$, $\omega_{s_2}=(\sqrt s-1)/2^l$ of
\cite[Lemma~1]{Isa06}, whose squares have the same tail. We use an analogous
pair attached to any $y$ with dyadic square, and we need only a one-sided
bound.

\begin{proposition}\label{prop:local}
Assume $\TU_1$. Let $y$ be irrational with $1/2<y<1$ and $y^2\in\dy$. Then
$f_N(y)\ge2/5$ for infinitely many~$N$.
\end{proposition}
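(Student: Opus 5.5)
The plan is to copy Isaac's pair from \cite[Lemma~1]{Isa06} with $\sqrt s$ replaced by $y$, shifted so that both numbers lie in $(0,1)$. I take
\[
\omega_1=1-\tfrac y2,\qquad \omega_2=y-\tfrac12,
\]
and set $\nu=\omega_1^2$ and $\nu'=\omega_2^2$. The hypothesis $1/2<y<1$ gives $\omega_1\in(1/2,3/4)$ and $\omega_2\in(0,1/2)$, so $\nu,\nu'\in(0,1)$. Expanding the squares,
\[
\nu=1-y+\tfrac{y^2}{4},\qquad \nu'=y^2-y+\tfrac14 ,
\]
so $\nu-\nu'=\tfrac34-\tfrac34y^2\in\dy$, because $y^2\in\dy$. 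Moreover $\nu$ is irrational, since $y$ is irrational and $y^2$ is rational. Thus $(\nu,\nu')$ is an admissible pair, and $\TU_1$ gives
\[
\limsup_N f_N(\omega_1)=\limsup_N f_N(\omega_2).
\]

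Next I translate both sides into statements about the digits of $y$, using Lemma~\ref{lem:shift}.

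For $\omega_2$: since $y>1/2$, the first digit of $y$ is $1$, and $y-1/2$ just deletes that digit. Hence $X_k(\omega_2)=X_k(y)$ for $k\ge2$, and part (1) of Lemma~\ref{lem:shift} with $s=0$ gives $f_N(\omega_2)-f_N(y)\to0$.

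For $\omega_1$: the digits of $y/2$ are those of $y$ shifted one place, so $X_{k+1}(y/2)=X_k(y)$. Since $y/2$ is irrational, $1-y/2$ has the complementary digits, $X_k(1-y/2)=1-X_k(y/2)$ for all $k\ge1$. Together these give $X_{k+1}(\omega_1)=1-X_k(y)$. Part (2) of Lemma~\ref{lem:shift} with $s=1$ then gives $f_N(\omega_1)+f_N(y)\to1$.

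Substituting both relations into the equality from $\TU_1$ yields
\[
1-\liminf_N f_N(y)=\limsup_N f_N(y).
\]
Since $\liminf\le\limsup$, this forces $\limsup_N f_N(y)\ge1/2$. Hence $f_N(y)>2/5$ for infinitely many $N$, which is the claim (with room to spare).

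The only delicate points are digit conventions. The complement rule $X_k(1-z)=1-X_k(z)$ needs $z$ to have no terminating expansion, which holds because $z=y/2$ is irrational. One must also check that $\omega_1,\omega_2\in(0,1)$, so that neither integer parts nor a carry out of the first digit interfere. I expect this bookkeeping, rather than any real difficulty, to be the main obstacle.
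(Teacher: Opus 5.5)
Your proof is correct and follows the paper's argument. You build two numbers in $(0,1)$ whose squares have the same tail, one carrying the digits of $y$ and the other their complements, and combine $\TU_1$ with Lemma~\ref{lem:shift}. The differences are cosmetic: you use the pair $(1-y/2,\,y-1/2)$, which is Isaac's pair with $l=1$, instead of the paper's $(y/2-1/4,\,1-y/8)$, and you derive $\limsup_N f_N(y)\ge 1/2$ directly rather than arguing by contradiction with the thresholds $9/20$ and $11/20$.
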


\begin{proof}
Put $\omega_1=y/2-1/4$ and $\omega_2=1-y/8$; both lie in $(0,1)$. Then
$\omega_2^2-\omega_1^2=15/16-15y^2/64\in\dy$, and $\omega_1^2$ is irrational
because $y$ is. Moreover $X_{k+1}(\omega_1)=X_k(y)$ for $k\ge2$, and
$X_{k+3}(\omega_2)=1-X_k(y)$ for all $k$, since $y$ is irrational. If
$f_N(y)<2/5$ for all large $N$, Lemma~\ref{lem:shift} gives
$\limsup f_N(\omega_1)\le9/20$ and $\limsup f_N(\omega_2)\ge11/20$,
contradicting $\TU_1$ for the pair $(\omega_1^2,\omega_2^2)$.
\end{proof}

Write $y(x)=\sqrt{x^2+1/4}$. Then $x^2$ and $y(x)^2$ have the same tail.

\begin{lemma}\label{lem:nonsquare}
Let $q\ge2$ and let $A<2^{q-1}$ be odd. Then $A^2+4^q$ is not a perfect
square. Consequently, if $n\ge3$ and $A<2^{n-2}$ is odd, then $y(A/2^n)$ is
irrational. It lies in $(1/2,1)$ and has dyadic square.
\end{lemma}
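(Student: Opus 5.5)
The statement has two parts: a number-theoretic claim about $A^2+4^q$, and its translation to $y(A/2^n)$. I would prove the first by factoring a putative square root, and then deduce the second by clearing denominators.

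\emph{Non-square claim.} Suppose $A^2+4^q=B^2$ with $B\ge0$. Then
\[
(B-A)(B+A)=2^{2q}.
\]
Since $A$ is odd, $B$ is odd as well. The two factors $B-A$ and $B+A$ are both powers of $2$, since their product is $2^{2q}$. They are also both even, since $A$ and $B$ are both odd. Their sum is $2B\equiv2\pmod 4$. Two powers of $2$, each at least $2$, sum to $2\pmod 4$ only if one of them equals $2$. The other is then $2^{2q-1}$. As $B-A<B+A$, we get $B-A=2$ and $B+A=2^{2q-1}$, so
\[
A=2^{2q-2}-1.
\]
This contradicts $A<2^{q-1}$, because $2^{2q-2}-1\ge2^{q-1}$ whenever $q\ge2$. (In fact $2^{2q-2}-1=2^{q-1}(2^{q-1}-1)+2^{q-1}-1\ge 2^{q-1}$ as soon as $q-1\ge1$.)

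\emph{Consequence.} Take $x=A/2^n$. Then
\[
y(x)^2=\frac{A^2}{4^n}+\frac14=\frac{A^2+4^{n-1}}{4^n},
\]
so $y(x)=\sqrt{A^2+4^{q}}/2^n$ with $q=n-1$. The hypotheses fit: $n\ge3$ gives $q\ge2$, and $A<2^{n-2}=2^{q-1}$. Hence the integer $A^2+4^q$ is not a perfect square, its square root is irrational, and so $y(x)$ is irrational.

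The square $y(x)^2$ is visibly in $\dy$. For the bounds, $x<2^{-2}$ gives $x^2<1/16$, so
\[
\tfrac14<y(x)^2<\tfrac5{16}<1,
\]
and therefore $1/2<y(x)<1$.

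The only real work is the parity argument for the factorization; everything else is routine.
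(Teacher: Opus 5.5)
Your proposal is correct and follows the paper's proof. Both factor $(B-A)(B+A)=2^{2q}$, use the oddness of $A$ to force $B-A=2$ and hence $A=2^{2q-2}-1\ge2^{q-1}$, then write $y(A/2^n)=\sqrt{A^2+4^{n-1}}/2^n$ and use $A/2^n<1/4$; your mod-$4$ argument just spells out the paper's terse step ``as $A$ is odd, $\alpha=1$''.
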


\begin{proof}
If $A^2+4^q=Z^2$, then $(Z-A)(Z+A)=2^{2q}$, so $Z-A=2^\alpha$, $Z+A=2^\beta$
with $\alpha<\beta$. As $A$ is odd, $\alpha=1$, whence $A=2^{2q-2}-1\ge
2^{q-1}$, a contradiction. For the second statement,
$y(A/2^n)=\sqrt{A^2+4^{n-1}}/2^n$; and $A/2^n<1/4$.
\end{proof}

\begin{lemma}\label{lem:step}
Assume $\TU_1$. Let $n\ge3$, let $A<2^{n-2}$ be odd and let $B\in\NN$. Then
there is $m\ge B$ such that $X_m(y(A/2^n))=0$ and
$f_{m-1}(y(A/2^n))\ge2/5$.
\end{lemma}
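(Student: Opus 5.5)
By Lemma~\ref{lem:nonsquare}, $y=y(A/2^n)$ is irrational, lies in $(1/2,1)$, and has dyadic square. So Proposition~\ref{prop:local} applies under $\TU_1$: $f_N(y)\ge2/5$ for infinitely many $N$. The plan is to pick such an $N$ with $N\ge B$ and then move forward to a zero digit using Lemma~\ref{lem:zero}, which does not decrease the frequency.

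Concretely, I first choose $M\ge B$ with $f_M(y)\ge2/5$; this exists because the set of such $N$ is infinite. Lemma~\ref{lem:zero}, applied with $z=y$ and this $M$, then gives $m>M\ge B$ with $X_m(y)=0$ and $f_{m-1}(y)\ge f_M(y)\ge2/5$. That is exactly the conclusion.

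The only point to check is that the hypotheses match. Lemma~\ref{lem:zero} needs only $M\ge1$, which holds if we take $M\ge\max(B,1)$. Proposition~\ref{prop:local} needs $1/2<y<1$, $y$ irrational and $y^2\in\dy$, all supplied by Lemma~\ref{lem:nonsquare} under the assumptions $n\ge3$, $A<2^{n-2}$ and $A$ odd. There is no real obstacle: the substantive content sits in Proposition~\ref{prop:local}, and this lemma merely packages it with a zero digit at position $m$, in the form needed for the later carry argument of Lemma~\ref{lem:carry}.
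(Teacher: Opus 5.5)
Your proof is correct and is essentially the paper's own argument: Lemma~\ref{lem:nonsquare} supplies the hypotheses of Proposition~\ref{prop:local}, which yields some $M$ beyond $B$ with $f_M(y)\ge2/5$, and Lemma~\ref{lem:zero} then advances to a zero digit without lowering the frequency. Your choice $M\ge\max(B,1)$ serves the same purpose as the paper's $M>B$, since both guarantee $M\ge1$ and $m>M\ge B$.
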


\begin{proof}
By Proposition~\ref{prop:local} and Lemma~\ref{lem:nonsquare}, there is
some $M>B$ with $f_M(y(A/2^n))\ge2/5$; now apply Lemma~\ref{lem:zero}.
\end{proof}

\noindent\textbf{The construction.} Assume $\TU_1$. Put $n_0=3$, $A_0=1$, and
for $j\ge0$ let $n_{j+1}$ be the integer $m$ given by Lemma~\ref{lem:step}
for $n=n_j$, $A=A_j$ and $B=4(j+1)n_j+4$. Put
$A_{j+1}=2^{n_{j+1}-n_j}A_j+1$. By induction, $n_j\ge3$, $A_j$ is odd and
$A_j<2^{n_j-2}$, so the construction can be continued, and
$n_{j+1}\ge4(j+1)n_j+4$, so $n_j\ge3\cdot4^j$. Let
\[
x_j=\frac{A_j}{2^{n_j}}=\sum_{i=0}^{j}2^{-n_i},\qquad
x=\sum_{i\ge0}2^{-n_i},\qquad y_j=y(x_j).
\]
Then $0<x<1/4$ and $0<x-x_j\le\frac32\,2^{-n_{j+1}}$.

\begin{lemma}\label{lem:x}
In the construction above:
\begin{enumerate}
\item $x^2$ is a Liouville number; in particular it is irrational;
\item $S_N(x)^2\le N+1$ for all $N$; in particular $f_N(x)\to0$;
\item $f_N(y(x))\ge2/5$ for infinitely many $N$.
\end{enumerate}
\end{lemma}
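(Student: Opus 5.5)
The three parts are independent, and I would take them in the order (2), (1), (3).

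For (2), the dyadic expansion of $x$ has $1$'s exactly at the positions $n_0<n_1<\dots$. There is no carry, since the $n_i$ are distinct and the expansion is not eventually all $1$'s. So $S_N(x)=\#\{i:n_i\le N\}$. If $S_N(x)=J+1$, then $n_J\le N$, and $n_J\ge 3\cdot4^J$. It then suffices that $(J+1)^2\le 3\cdot4^J+1$, which holds for all $J\ge0$ (for $J=0$ it reads $1\le4$). Hence $S_N(x)^2\le N+1$, and $f_N(x)\le\sqrt{N+1}/N\to0$.

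For (1), I would approximate $x^2$ by the dyadic rationals $x_j^2=A_j^2/4^{n_j}$. We have $0<x^2-x_j^2=(x-x_j)(x+x_j)\le\frac12\cdot\frac32\,2^{-n_{j+1}}<2^{-n_{j+1}}$. The denominator is $q_j=2^{2n_j}$, and $n_{j+1}\ge4(j+1)n_j$, so $x^2-x_j^2<q_j^{-2(j+1)}$ with exponent tending to infinity. If $x^2=p/q$ were rational, then $0<|p/q-P/q_j|$ would force $|x^2-x_j^2|\ge 1/(qq_j)$, which contradicts the bound for large $j$. The approximants are distinct and the error is nonzero because $x>x_j$. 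Hence $x^2$ is Liouville and irrational.

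For (3), which is the main step, fix $j$ and let $m=n_{j+1}$. By Lemma~\ref{lem:step}, $X_m(y_j)=0$ and $f_{m-1}(y_j)\ge2/5$. I want to apply Lemma~\ref{lem:carry} with $z=y_j$ and $w=y(x)$, which requires $\lfloor2^my_j\rfloor\le\lfloor2^my(x)\rfloor\le\lfloor2^my_j\rfloor+1$. Since $y$ is increasing on $[0,\infty)$, we have $y(x)>y_j$. Also $y'(t)=t/y(t)\le 1/2$ for $0\le t<1/4$, because $y\ge1/2$. By the mean value theorem, $0<y(x)-y_j\le\frac12(x-x_j)\le\frac34\,2^{-m}<2^{-m}$. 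Thus $2^my(x)-2^my_j\in(0,1)$, so the two floors differ by at most $1$ and the hypothesis holds. Lemma~\ref{lem:carry} then gives that $y(x)$ and $y_j$ share their first $m-1$ digits, so $f_{m-1}(y(x))=f_{m-1}(y_j)\ge2/5$.

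Since $m=n_{j+1}\to\infty$, we get infinitely many such $N=n_{j+1}-1$. The only delicate point is the derivative bound $x-x_j\le\frac32 2^{-n_{j+1}}$ times the Lipschitz constant $1/2$, which is what keeps the perturbation below $2^{-m}$. This is exactly why the construction placed the next $1$ of $x$ at the zero digit $m$ of $y_j$.
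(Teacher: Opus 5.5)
Your proposal is correct and is essentially the paper's proof: you use the same approximants $x_j^2=A_j^2/4^{n_j}$ for (1), and the same application of Lemma~\ref{lem:carry} at $m=n_{j+1}$ with the bound $0<y(x)-y_j\le\frac34\,2^{-m}$ for (3). The differences are only local: in (2) you read the digits of $x$ off directly instead of comparing $x$ with $x_j$ via Lemma~\ref{lem:carry}, and in (3) you get the bound from the mean value theorem with $y'\le 1/2$, where the paper uses $y(x)-y_j=(x^2-x_j^2)/(y(x)+y_j)$ together with $y(x)+y_j>1$.
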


\begin{proof}
(1) We have $0<x^2-x_k^2\le\frac34\,2^{-n_{k+1}}<(4^{n_k})^{-k}$ because
$n_{k+1}\ge2kn_k$, and $x_k^2=A_k^2/4^{n_k}$.

(2) Given $N$, let $j$ be least with $N<n_{j+1}-1$. By Lemma~\ref{lem:carry},
applied at $m=n_{j+1}-1$, the numbers $x$ and $x_j$ have the same first $N$
digits, and $S_N(x_j)\le j+1$ by the second part of Lemma~\ref{lem:carry}. If
$j\ge1$, then $N\ge n_j-1\ge3\cdot4^j-1\ge(j+1)^2-1$.

(3) Let $m=n_{j+1}$. By the choice of $n_{j+1}$, $X_m(y_j)=0$ and
$f_{m-1}(y_j)\ge2/5$. Since $y(x)+y_j>1$,
\[
0\le y(x)-y_j\le x^2-x_j^2\le\tfrac34\,2^{-m},
\]
so $\lfloor2^my_j\rfloor\le\lfloor2^my(x)\rfloor\le\lfloor2^my_j\rfloor+1$.
By Lemma~\ref{lem:carry}, $y(x)$ and $y_j$ have the same first $m-1$ digits,
so $f_{m-1}(y(x))\ge2/5$ for every~$j$.
\end{proof}

\begin{proof}[First proof of Theorem~\ref{thm:main}]
Assume $\TU_1$ and take $x$ from the construction. Put $\nu=x^2$ and
$\nu'=x^2+1/4$. Both lie in $(0,1)$, $\nu$ is irrational by
Lemma~\ref{lem:x}(1), and $\nu'-\nu=1/4$. By Lemma~\ref{lem:x}(2),
$\limsup f_N(\sqrt\nu)=0$, while $\limsup f_N(\sqrt{\nu'})\ge2/5$ by
Lemma~\ref{lem:x}(3). This contradicts $\TU_1$.
\end{proof}

\begin{remark}
One can replace Proposition~\ref{prop:local} by \cite[Thm.~2]{Isa06}: under
\TU, $y_j=\sqrt{4^{n_j-1}+A_j^2}/2^{n_j}$ is a shift of the square root of a
non-square, hence simply normal. Proposition~\ref{prop:local} avoids this and
the passage through $\{\sqrt s\}$. The gap condition $n_{j+1}\ge4(j+1)n_j+4$
is stronger than needed for (2) and (3); it is chosen so that $x^2$ is a
Liouville number.
\end{remark}

\section{Second proof: Fourier decay}\label{sec:fourier}

Following \cite[\S2]{MS18}, for $a\in(0,1)$, translations $t=(t_1,\dots,t_m)$
and weights $p=(p_1,\dots,p_m)$ let $\mu^p_{a,t}$ be the distribution of the
random sum $\sum_{n\ge1}T_na^n$, where the $T_n$ are independent with
$\mathbb P(T_n=t_i)=p_i$. Such measures are called \emph{homogeneous
self-similar measures}. For a Borel probability measure $\mu$ on $\RR$ and a
map $F$, write $F\mu$ for the image measure and
$\widehat\mu(u)=\int e^{-2\pi iux}\,d\mu(x)$. We use two theorems from the
literature.

\begin{theorem}[{Mosquera--Shmerkin \cite[Thm.~3.1]{MS18}}]\label{thm:ms}
Let $F\in C^2(\RR)$ with $F''>0$, and let $\mu=\mu^p_{a,t}$ be a homogeneous
self-similar measure with all $p_i>0$ and without atoms. Then there are
$\sigma>0$ and $C>0$ such that
\[
|\widehat{F\mu}(u)|\le C|u|^{-\sigma}\qquad(|u|\ge1).
\]
\end{theorem}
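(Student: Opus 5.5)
The plan is Kaufman's linearization argument as refined in \cite{MS18}: decompose $\mu$ into small self-similar pieces, on each of which $F$ is almost affine, and exploit that the slopes $F'$ of the pieces are spread out because $F''>0$. First, I record two inputs about $\mu=\mu^p_{a,t}$. The Fourier transform is an infinite product $\widehat\mu(\xi)=\prod_{n\ge1}\Phi(a^n\xi)$ with $\Phi(\xi)=\sum_ip_ie^{-2\pi i t_i\xi}$. Since $\mu$ has no atoms and all $p_i>0$, it is uniformly Frostman: $\mu(B(x,r))\le Cr^{s}$ for some $s>0$. This follows by iterating self-similarity, because an atomless homogeneous measure gives each level-$k$ cylinder interval a mass bounded away from one uniformly. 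The image $\nu=(F')\mu$ then satisfies the same bound with a different constant, because $F'$ is bi-Lipschitz on the compact support, as $F''\ge c>0$ there.

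Next, for a large frequency $u$ I choose $n$ with $a^{n}\approx|u|^{-1/2+\delta}$. I write $\mu=\sum_{|w|=n}p_w\,\mu_w$, where $\mu_w$ is $\mu$ scaled by $a^n$ and translated to the point $x_w$. On $\operatorname{supp}\mu_w$, Taylor's formula gives $F(x_w+a^ny)=F(x_w)+F'(x_w)a^ny+O(a^{2n})$. The phase error is $O(|u|a^{2n})$, which I make $\le|u|^{-\delta'}$ by taking $a^n$ a bit smaller than $|u|^{-1/2}$. Hence
\[
|\widehat{F\mu}(u)|\le\sum_wp_w\bigl|\widehat\mu\bigl(u a^nF'(x_w)\bigr)\bigr|+O(|u|^{-\delta'}),
\]
and the sum is essentially $\int|\widehat\mu(v\,t)|\,d\nu(t)$ with $v=ua^n\approx|u|^{1/2}$, which is still large.

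The main obstacle is the bound
\[
\int|\widehat\mu(vt)|\,d\nu(t)\lesssim v^{-\sigma'}.
\]
The difficulty is that $\mu$ itself may have no Fourier decay, as for Pisot contraction ratios, so the bound must be an averaged statement. I would prove it by an Erd\H{o}s--Kahane type large deviation argument on the product formula. Consider $t$ in a compact interval of slopes, fix the scale $v$, and count the indices $n\le\log v/\log(1/a)$ for which $a^nvt$ lies within $\varepsilon$ of the zero set or near the maximum set of $|\Phi|$. If fewer than a proportion $\theta$ of the factors satisfy $|\Phi|\le1-\eta$, then the digits of $a^nvt$ are constrained. A combinatorial count then shows that the exceptional set where $|\widehat\mu(vt)|>v^{-\kappa}$ is covered by $v^{\epsilon}$ intervals of length $v^{-1}$, with $\epsilon$ small when $\theta$ is small. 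This is the standard Erd\H{o}s--Kahane covering; non-degeneracy of $\Phi$ uses that $\mu$ is non-atomic, so that not all $t_i$ coincide. Applying the Frostman bound for $\nu$ to this cover gives exceptional $\nu$-mass $\lesssim v^{\epsilon-s}$, which is a power saving once $\epsilon<s$. Off the exceptional set the integrand is at most $v^{-\kappa}$.

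Combining the two steps gives $|\widehat{F\mu}(u)|\lesssim|u|^{-\delta'}+|u|^{-(s-\epsilon)/2}+|u|^{-\kappa/2}$, which yields the power decay $|u|^{-\sigma}$. Carrying out the digit count in the exceptional-set step is where I expect most of the work.
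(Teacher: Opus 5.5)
The paper does not prove this statement. It quotes \cite[Thm.~3.1]{MS18} and uses it only as a hypothesis (an axiom in the Lean files), so there is no internal proof to compare with. Your outline follows the route of the cited source, namely Kaufman's linearization made quantitative by a sparse-exceptional-frequency estimate, and it is correct in outline. Three details need fixing or filling in.

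\emph{The exponent.} The sign is wrong at first: take $a^n\asymp|u|^{-1/2-\delta}$. Then $|u|a^{2n}\asymp|u|^{-2\delta}$ and $v=|u|a^n\asymp|u|^{1/2-\delta}$.

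\emph{The Frostman bound.} The iteration works, but state it as $m(a^kr)\le m(r)\,m(a^k(r+2D))$. Here $m(\rho)=\sup_x\mu(B(x,\rho))$ and $\operatorname{supp}\mu\subset[-D,D]$. Only words $w$ with $|x-x_w|\le a^k(r+D)$ contribute to $\mu(B(x,a^kr))$, and their total weight is at most $\mu(B(x,a^k(r+2D)))$. Since $m(\rho)\to0$ for atomless $\mu$, fix $k$ with $m(3a^kD)<1$ and iterate with $r=a^{jk}D$.

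\emph{The slope measure.} Your sum over $w$ is an integral against the discrete measure $\sum_wp_w\delta_{F'(x_w)}$, not against $\nu$. The swap costs only $O(va^n)=O(|u|a^{2n})$, because $\widehat\mu$ is Lipschitz and $|F'(y)-F'(x_w)|\lesssim a^n$ on the $w$-cylinder.

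For the digit count, which you left open, one pair $t_i\ne t_j$ suffices. It gives $|\Phi(x)|\le1-c\|(t_j-t_i)x\|^2$ with $c=8p_ip_j/(p_i+p_j)$, where $\|\cdot\|$ is the distance to $\ZZ$.
\begin{itemize}
\item Put $\xi=(t_j-t_i)vt$ with $|t|\le M$. Let $k_n$ be the nearest integer to $a^n\xi$, and take $N\asymp\log v/\log(1/a)$ with $|a^N\xi|\le1$. Then $k_N\in\{-1,0,1\}$.
\item Each $k_n$ is an integer within $a^{-1}\|a^{n+1}\xi\|+\|a^n\xi\|$ of $a^{-1}k_{n+1}$. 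So $k_n$ is forced when both distances are below $\varepsilon_0$, where $(a^{-1}+1)\varepsilon_0<1/2$. Otherwise it has at most $a^{-1}+2$ choices.
\item If at least $\theta N$ indices $1\le n\le N$ have $\|a^n\xi\|\ge\varepsilon_0$, then $|\widehat\mu(vt)|\le(1-c\varepsilon_0^2)^{\theta N}\lesssim v^{-\kappa}$.
\item Otherwise $(k_n)$ is one of at most $v^{\epsilon(\theta)}$ sequences, with $\epsilon(\theta)\to0$ as $\theta\to0$. Its value $k_0$ pins $t$ to an interval of length $O(1/v)$.
\end{itemize}
This uses nothing about $a$, so Pisot ratios cause no trouble. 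Slopes near $0$, where $F'$ may vanish, are covered as well. Choosing $\theta$ with $\epsilon(\theta)<s$ then completes your argument.
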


\begin{theorem}[{Davenport--Erd\H{o}s--LeVeque \cite{DEL63}; see \cite[\S1]{MS18}}]\label{thm:del}
Let $\mu$ be a Borel probability measure on $\RR$ with
$|\widehat\mu(u)|\le C|u|^{-\sigma}$ for $|u|\ge1$, where $\sigma>0$. Then
$\mu$-almost every $z$ is simply normal to base $2$, that is,
$f_N(z)\to1/2$.
\end{theorem}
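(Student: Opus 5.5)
The plan is to prove the stronger statement that for $\mu$-almost every $z$ the sequence $(2^nz)_{n\ge0}$ is equidistributed modulo $1$, and then read off simple normality. Since $X_{k}(z)=1$ exactly when the fractional part $\{2^{k-1}z\}$ lies in $[1/2,1)$, we have $S_N(z)=\#\{0\le n<N:\{2^nz\}\in[1/2,1)\}$. The indicator of $[1/2,1)$ is Riemann integrable, so equidistribution gives $f_N(z)\to1/2$. By Weyl's criterion, and since there are only countably many frequencies $h\in\ZZ\setminus\{0\}$, it suffices to show for each fixed $h\ne0$ that
\[
A_N(z)=\frac1N\sum_{n=0}^{N-1}e^{-2\pi i h2^nz}\to0\quad\text{for $\mu$-a.e.\ }z .
\]

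\textbf{Step 1: second moment bound.} I will expand $|A_N|^2$ and integrate term by term:
\[
\int|A_N|^2\,d\mu=\frac1{N^2}\sum_{m,n<N}\widehat\mu\bigl(h(2^m-2^n)\bigr).
\]
The $N$ diagonal terms contribute $1/N$. For $m>n$ we have $|h(2^m-2^n)|\ge2^{m-1}\ge1$, so the decay hypothesis bounds each such term by $C2^{-\sigma(m-1)}$. For each $m$ there are at most $m$ choices of $n$, so the off-diagonal part is at most
\[
\frac{2C}{N^2}\sum_{m\ge1}m\,2^{-\sigma(m-1)}=O_\sigma(N^{-2}).
\]
Altogether $\int|A_N|^2\,d\mu\le C'/N$.

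\textbf{Step 2: almost sure convergence along a sparse subsequence.} Take $N=k^2$. Then $\sum_k\int|A_{k^2}|^2\,d\mu\le C'\sum_kk^{-2}<\infty$. By monotone convergence the series $\sum_k|A_{k^2}(z)|^2$ is finite $\mu$-a.e., and hence $A_{k^2}(z)\to0$ almost everywhere.

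\textbf{Step 3: interpolation.} For $k^2\le N<(k+1)^2$, each summand has modulus $1$, so $|NA_N-k^2A_{k^2}|\le N-k^2\le2k+1$. This gives $|A_N|\le|A_{k^2}|+(2k+1)/k^2\to0$. Intersecting the full-measure sets over all $h\ne0$ gives equidistribution, and hence simple normality, $\mu$-almost everywhere.

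The only step with real content is Step 1. Its key point is that the lacunarity of $2^n$ makes the frequencies $h(2^m-2^n)$ grow geometrically, so a merely polynomial decay $|u|^{-\sigma}$ becomes a summable geometric series. Any exponent $\sigma>0$ therefore suffices. The remaining care is in the bookkeeping: the bound $|u|\ge1$ must hold for every off-diagonal pair, which it does since $|h|\ge1$ and $2^m-2^n\ge2^{m-1}\ge1$.
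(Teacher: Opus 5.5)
Your proof is correct. Step~1 is the paper's computation of the mean square $I(N)\ll N^{-1}$, with the lacunarity bookkeeping written out where the paper leaves it implicit: each off-diagonal frequency has modulus at least $2^{\max(m,n)-1}$, so the off-diagonal part is a convergent series.

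The difference lies in passing from the second moment to almost-sure convergence. The paper cites the Davenport--Erd\H{o}s--LeVeque criterion $\sum_N N^{-1}I(N)<\infty$. It also asserts, without details, that the original proof for Lebesgue measure uses only finiteness of the measure. You instead prove the needed statement directly: the subsequence $N=k^2$, monotone convergence, and the trivial interpolation $|NA_N-k^2A_{k^2}|\le N-k^2$. This is in effect the special case of the DEL argument. That argument chooses a subsequence $N_k$ with $N_{k+1}/N_k\to1$ and $\sum_k I(N_k)<\infty$; your strong bound $I(N)\ll 1/N$ lets you take $N_k=k^2$ outright.

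Your route gives a self-contained proof for an arbitrary Borel probability measure, without relying on the unverified transfer claim. The cited criterion gives more generality, since $\sum_N N^{-1}I(N)<\infty$ holds under Fourier decay much slower than any power of $|u|$. That generality is not needed here, because the Mosquera--Shmerkin theorem already supplies power decay.
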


\begin{remark}
In \cite[Thm.~3.1]{MS18} it is only assumed that $\mu$ is not a single
atom; we assume that $\mu$ has no atoms, which is all we need. The theorem of \cite{DEL63} is stated
for Lebesgue measure on an interval: if $\sum_NN^{-1}I(N)<\infty$ for every
$m\ne0$, where $I(N)$ is the mean square of the Weyl sum
$N^{-1}\sum_{n\le N}e(ms_n(x))$, then $s_n(x)$ is uniformly distributed
modulo~$1$ for almost all~$x$. Its proof \cite[\S2]{DEL63} uses only that the
measure is finite, so it holds for any Borel probability measure~$\mu$. For
$s_n(x)=2^nx$ one has
\[
I(N)=N^{-2}\sum_{n,n'\le N}\widehat\mu\bigl(m(2^n-2^{n'})\bigr)\ll N^{-1}
\]
under power decay, so $\mu$-almost every $z$ is normal to base~$2$, as noted in
\cite[\S1]{MS18}. We record only simple normality to base~$2$.
\end{remark}

Let $\mu=\mu^p_{a,t}$ with $a=1/8$, $t=(0,1)$ and $p=(1/2,1/2)$: the natural
measure on the Cantor set of ratio $1/8$. It is the image of the product of
fair coins on $\{0,1\}^\NN$ under
$\pi(\iota)=\sum_{k\ge0}\iota_k8^{-(k+1)}$.

\begin{lemma}\label{lem:cantor}
\begin{enumerate}
\item The map $\pi$ is injective and $\mu$ has no atoms.
\item For every $\iota$, $X_k(\pi(\iota))=0$ unless $3\mid k$. Hence
$S_N(\pi(\iota))\le\lfloor N/3\rfloor$ for all $N$.
\end{enumerate}
\end{lemma}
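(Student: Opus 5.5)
The plan is to read off the dyadic expansion of $\pi(\iota)$ directly from its definition. Writing $8^{-(k+1)}=2^{-3(k+1)}$, we have
\[
\pi(\iota)=\sum_{k\ge0}\iota_k2^{-3(k+1)},
\]
a binary expansion whose digit in position $3(k+1)$ is $\iota_k$ and whose other digits are $0$. The first step is to check that this agrees with the digit convention of Section~\ref{sec:setting}, that is, $X_j(\pi(\iota))=\lfloor2^j\pi(\iota)\rfloor\bmod2$ is exactly this digit. Since the digit string never ends in all $1$'s (every position not divisible by $3$ is $0$), it is the terminating or unique expansion selected by $X_j$.

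To make this precise, fix $j$ and split the series for $2^j\pi(\iota)$ into terms with $3(k+1)\le j$, which are integers, and the tail with $3(k+1)>j$. The tail lies in $[0,1)$, because it is at most
\[
\sum_{i\ge1}2^{-i-?}\le \frac{2^{-1}}{1-1/8}<1
\]
after indexing from the first $k$ with $3(k+1)>j$; this gives the bound $\sum_{r\ge0}2^{-1-3r}=4/7<1$ in the worst case $3(k+1)=j+1$. Hence $\lfloor2^j\pi(\iota)\rfloor=\sum_{3(k+1)\le j}\iota_k2^{j-3(k+1)}$. Its parity is $\iota_k$ if $j=3(k+1)$, and $0$ if $3\nmid j$, because then every exponent $j-3(k+1)$ is positive.

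This proves (2). The bound on $S_N$ follows at once, since at most $\lfloor N/3\rfloor$ indices $j\le N$ are multiples of $3$, and each contributes at most $1$.

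For (1), injectivity follows from the same computation: $\iota_k=X_{3(k+1)}(\pi(\iota))$, so $\iota$ is recovered from $\pi(\iota)$. For non-atomicity, take a point $z$. Its preimage under $\pi$ is at most one sequence $\iota$, so
\[
\mu(\{z\})\le \mathbb P(\text{coins}=\iota)\le 2^{-n}\quad\text{for every }n,
\]
and hence $\mu(\{z\})=0$. The only mildly delicate step is the tail estimate ensuring no carries. The spacing of ratio $8$, which leaves two zero digits between consecutive possible $1$'s, makes the tail sum strictly less than $1$.
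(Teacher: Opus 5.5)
Your proof is correct and follows essentially the same route as the paper. You write $\lfloor 2^j\pi(\iota)\rfloor$ as an integer sum plus a tail in $[0,4/7]$; the paper does the same at the levels $8^m$ with remainder $R\le 1/7$ and then scales by $2$ or $4$. Digits and injectivity are read off from this, and non-atomicity comes from injectivity plus the fact that singletons in $\{0,1\}^{\NN}$ have product measure at most $2^{-n}$. The only blemish is the garbled first display in your tail estimate, which should simply be replaced by the bound $\sum_{r\ge0}2^{-1-3r}=4/7$ that you state right after it.
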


\begin{proof}
For every $m$, $8^m\pi(\iota)=\sum_{k<m}\iota_k8^{m-1-k}+R$ with
$0\le R\le1/7$. Hence $\lfloor8^m\pi(\iota)\rfloor$ determines
$\iota_0,\dots,\iota_{m-1}$, and $\lfloor2^{3m+r}\pi(\iota)\rfloor$ is even
for $r=1,2$. Each point of $\{0,1\}^\NN$ has product measure at most $2^{-n}$
for every~$n$.
\end{proof}

\begin{proof}[Second proof of Theorem~\ref{thm:main}]
Let $g(\omega)=\sqrt{\omega^2/4+1/4}$. Then $g$ is smooth with
$g''=1/(16g^3)>0$. By Theorems~\ref{thm:ms} and~\ref{thm:del} and
Lemma~\ref{lem:cantor}(1), $g(\omega)$ is simply normal to base $2$ for
$\mu$-almost every $\omega$. Since $\mu$ has no atoms, also $\omega^2$ is
irrational for $\mu$-almost every $\omega$: the exceptional set is contained
in the countable set $\{\pm\sqrt q:q\in\QQ\}$. Fix $\omega$ with both
properties. Then $0<\omega\le1/7$. Put $\nu=\omega^2/4$ and
$\nu'=\nu+1/4=g(\omega)^2$. Both lie in $(0,1)$, $\nu$ is irrational, and
$\nu'-\nu\in\dy$. Since $X_{k+1}(\omega/2)=X_k(\omega)$,
Lemma~\ref{lem:cantor}(2) gives $f_N(\omega/2)\le1/3+2/N\le2/5$ for $N\ge30$.
Hence
\[
\limsup f_N(\sqrt\nu)\le\tfrac25<\tfrac12=\lim f_N(\sqrt{\nu'}).\qedhere
\]
\end{proof}

\begin{remark}
(i) The measure $\mu$ is an affine image of the Bernoulli convolution
$\nu_{1/8}$, the distribution of $\sum_{n\ge1}\pm8^{-n}$. According to
\cite[\S1]{MS18}, Kaufman \cite{Kau84} proved $\dim_F(F\nu_a)>0$ for all
$a\in(0,1/2)$ and all $C^2$ diffeomorphisms $F$ with $F''>0$, so for this
measure Theorem~\ref{thm:ms} is already due to him; Theorem~\ref{thm:ms}
extends and quantifies his argument.

(ii) The proof shows more: \TU{} fails at $(\omega^2/4,\omega^2/4+1/4)$ for
$\mu$-almost every $\omega$, a set of Hausdorff dimension $1/3$. The same
argument works for the Bernoulli measure of parameter $p\ne1/2$ on the dyadic
digits. That measure gives a set of dimension close to $1$ for $p$ close to
$1/2$. It requires the strong law of large numbers in place of
Lemma~\ref{lem:cantor}(2).
\end{remark}

\appendix
\section{Formal verification}
\sloppy

All definitions, lemmas, propositions and theorems of this note are proved
in Lean~4 with Mathlib in the file \texttt{BB47/Isaac.lean}; the remarks are
not part of the formal verification. Theorems~\ref{thm:ms}
and~\ref{thm:del} are cited results. They are recorded as axioms in
\texttt{CITED/MosqueraShmerkin.lean} and
\texttt{CITED/DavenportErdosLeVeque.lean}. The first proof uses only the
standard axioms of Lean; the second uses, in addition, exactly these two. The
files are available in the Github repository
\texttt{https://github.com/rwst/lean-code}.

\medskip
{\small
\noindent
\begin{tabular}{@{}l>{\raggedright\arraybackslash}p{0.6\textwidth}@{}}
\toprule
Statement & Lean declaration (namespace \texttt{BB47.Isaac}) \\
\midrule
Definition~\ref{def:tu} & \texttt{TU}, \texttt{TUfull}, \texttt{SameTail}, \texttt{freq} \\
Lemma~\ref{lem:shift} & \texttt{tendsto\_freq\_sub}, \texttt{tendsto\_freq\_add} \\
Lemma~\ref{lem:carry} & \texttt{floor\_agree}, \texttt{cnt\_agree}, \texttt{cnt\_add\_pow} \\
Lemma~\ref{lem:zero} & \texttt{exists\_zero\_freq} \\
Proposition~\ref{prop:local} & \texttt{frequently\_freq\_ge} \\
Lemma~\ref{lem:nonsquare} & \texttt{not\_isSquare\_add\_four\_pow}, \texttt{ysq\_dyadic\_irrational} \\
Lemma~\ref{lem:step} & \texttt{bootstrap\_step} \\
Lemma~\ref{lem:x} & \texttt{liouville\_sq\_xlim}, \texttt{tendsto\_freq\_xlim}, \texttt{frequently\_freq\_ylim} \\
Theorem~\ref{thm:main}, first proof & \texttt{not\_TUfull\_bootstrap}, \texttt{not\_TU\_bootstrap} \\
Theorem~\ref{thm:ms} (axiom) & \texttt{MS.fourierDecay\_smoothImage} \\
Theorem~\ref{thm:del} (axiom) & \texttt{DEL.ae\_simplyNormal\_of\_fourierDecay} \\
Lemma~\ref{lem:cantor} & \texttt{cantor8\_injective}, \texttt{mu8\_singleton}, \texttt{cnt\_cantor8\_le} \\
Theorem~\ref{thm:main}, second proof & \texttt{not\_TUfull\_fourier}, \texttt{not\_TU\_fourier} \\
\bottomrule
\end{tabular}
}



\begin{thebibliography}{DEL63}

\bibitem[DEL63]{DEL63}
H.~Davenport, P.~Erd\H{o}s and W.~J.~LeVeque, \emph{On Weyl's criterion for
uniform distribution}, Michigan Math.\ J.\ \textbf{10} (1963), 311--314.

\bibitem[Isa06]{Isa06}
R.~Isaac, \emph{On the simple normality to base 2 of $\sqrt s$, for $s$ not a
perfect square}, preprint (2006), arXiv:math/0512404v2.

\bibitem[Isa11]{Isa11}
R.~Isaac, \emph{On a sufficient condition that the square root of $s$ is
simply normal to base 2, for $s$ not a perfect square}, preprint (2011),
arXiv:1104.1616.

\bibitem[Kau84]{Kau84}
R.~Kaufman, \emph{On Bernoulli convolutions}, in: Conference in Modern
Analysis and Probability (New Haven, 1982), Contemp.\ Math.\ \textbf{26},
Amer.\ Math.\ Soc., Providence, RI, 1984, 217--222.

\bibitem[MS18]{MS18}
C.~A.~Mosquera and P.~S.~Shmerkin, \emph{Self-similar measures: asymptotic
bounds for the dimension and Fourier decay of smooth images}, Ann.\ Acad.\
Sci.\ Fenn.\ Math.\ \textbf{43} (2018), 823--834.

\end{thebibliography}
\end{document}